\newtheorem{thm}{Theorem}[section]
\newtheorem{lem}{Lemma}[section]
\newtheorem{defn}{Definition}[section]
\begin{document}
\begin{center}
 {\bf \large Pseudo Principal Pivot Transform: The Group Inverse Case}
\vspace{.5cm}

             { \bf Kavita Bisht}\\
Department of Mathematics\\
                         Indian Institute of Technology Madras\\

                               Chennai 600 036, India \\
          
                and\\
              {\bf K.C. Sivakumar} \\
                         Department of Mathematics\\
                         Indian Institute of Technology Madras\\

                               Chennai 600 036, India. \\
\end{center}

\begin{center}
{\bf Abstract}
\end{center}
In this short note, we prove a formula for the group inverse of a block matrix and consider the pseudo principal pivot transform expressed in terms of group inverses. Extensions of the usual principal pivot transform, where the usual inverse is replaced by the Moore-Penrose inverse, were considered in the literature.  The objective here is to derive the corresponding formulae for the group generalized inverse. These are expected to be useful in our future work.

{\bf Keywords:} Group inverse, pseudo Schur complement, pseudo principal pivot transform. 

\newpage
\section{Introduction and Preliminaries}
Let $A\in \mathbb{C}^{n\times n}$. If there exists $X\in \mathbb{C}^{n \times n}$ satisfying the equations $AXA=A, ~XAX=X$ and $AX=XA$, then such an $X$ can be shown to be unique. This unique solution is called the {\it group inverse} of $A$ and is denoted by $A^{\#}$. The nomenclature group inverse arises from the fact that the positive powers of $A$ and $A^{\#}$ together with the idempotent matrix $AA^{\#}$ as the identity element, form an abelian group under matrix multiplication and was thus named by I.Erdelyi in $1967$. The group inverse of a matrix $A\in \mathbb{C}^{n \times n}$ need not always exist. It is well known that $A^{\#}$ exists if and only if $R(A) \cap N(A)=\{0\}$. In other words, $A^{\#}$ exists if and only if the index of $A$ equals $1$. Let us recall that the {\it index} of a square matrix $A$ is the smallest positive integer $k$ such that $rank(A^k)=rank(A^{k+1})$. If $A$ is invertible, then the index is defined to be zero. An easy dimensionality argument can be used to show that the 
index exists for any square matrix. Thus, $A^{\#}$ exists if and only if $rank(A)=rank(A^2)$. For more details and other properties of the group inverse, we refer to the book \cite{bens}. 

Let $A,B,C $ and $D \in \mathbb{C}^{n \times n}$ and $M=
\left(\begin{array}{cc} 
A & B \\ 
C & D 
\end{array} \right)$. Suppose that $A^{-1}$ exists. Then the matrix $D-CA^{-1}B$ is called the {\it Schur complement} of $A$ in $M$. Next, let $D^{-1}$ exist. Then the matrix $A-BD^{-1}C$ is called the {\it associated Schur complement} of $D$ in $M$. The usefulness of the Schur complement is well documented in numerous texts on matrices and numerical analysis (see for instance, \cite{zh}). Let us turn our attention to the principal pivot transform. Tucker introduced this notion in his studies on linear programming problems. Let $M$ be the block matrix, defined as above. Suppose that $A^{-1}$ exists. Then the principal pivot transform of $M$ relative to $A$ is defined by the matrix 
\begin{center}
$\left(\begin{array}{cc}
  A^{_1} & -A^{-1}B \\
  CA^{-1}  & D-CA^{-1} B
 \end{array}\right)$.
\end{center}
Note that the bottom right block of the principal pivot transform is the Schur complement of $A$ in $M$, establishging a relationship between these notions. We refer to the recent work \cite{tsat} for some interesting properties of the principal pivot transform. 

Extensions of the formulae for the Schur complement \cite{carlhaymark} and the principal pivot transform
\cite{meen}, where the usual inverse is replaced by the Moore-Penrose inverse were studied rather long ago.  
We refer to these as the {\it pseudo Schur complement} and {\it pseudo principal pivot transform}, respectively. In \cite{kavi}, some of the results of \cite{tsat} and a few inheritance properties were proved, for the pseudo Schur complement and the pseudo principal pivot transform. In the present work, we consider the case of the group inverse, especially for the pseudo principal pivot transform. First, we prove a formula for the group inverse of a block matrix in the presence of some assumptions and then derive the basic properties of the principal pivot transform.

\section{The Pseudo Principal Pivot Transform in terms of the Group Inverse.}
We begin by proving a formula for the group inverse of partitioned matrices. 
\begin{thm}\label{GA}
Let $A,B,C $ and $D \in \mathbb{C}^{n \times n}$. Suppose that $A^{\#}$ exists. Set $K=D-CA^\# B$ and $M=
\left(\begin{array}{cc} 
A & B \\ 
C & D 
\end{array} \right)$. Suppose that $K^{\#}$ exists. Then $ R(C^*)\subseteq R(A^*)$, $R(B)\subseteq R(A)$, $R(C)\subseteq R(K)$ and 
$R(B^*)\subseteq R(K^*)$ if and only if
\begin{center}
$M^\#=\left(\begin{array}{cc}
A^\# +A^\# BK^\# CA^\# & -A^\# BK^\# \\
-K^\# CA^\# & K^\#
\end{array} \right)$ . 
\end{center} 
\end{thm}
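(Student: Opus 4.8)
The plan is to let $X$ denote the block matrix on the right-hand side and to show $X = M^\#$ if and only if the four range inclusions hold, by verifying (respectively, extracting) the three defining equations $MXM=M$, $XMX=X$, $MX=XM$ of the group inverse. The first step is to rewrite the range conditions as algebraic identities, using that for a matrix of index one the product $AA^\# = A^\#A$ is the projector onto $R(A)$ along $N(A)$: thus $R(B)\subseteq R(A)\Leftrightarrow AA^\#B=B$, $\ R(C^*)\subseteq R(A^*)\Leftrightarrow CA^\#A=C$, $\ R(C)\subseteq R(K)\Leftrightarrow KK^\#C=C$, and $R(B^*)\subseteq R(K^*)\Leftrightarrow BK^\#K=B$. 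I will also record the elementary index-one facts $A(I-AA^\#)=(I-AA^\#)A=0$, $\ A^\#(I-AA^\#)=0$, and the analogues for $K$, all of which follow from $AA^\#A=A$ together with $AA^\#=A^\#A$.

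For the ``if'' direction I would substitute $D=K+CA^\#B$ and compute the blocks of $MX$ and $XM$ directly. Using the four identities above one finds $MX=XM=\mathrm{diag}(AA^\#,KK^\#)$; multiplying this on the right by $M$ gives $MXM=M$ (the only non-immediate block is the $(2,2)$ entry $KK^\#D$, which collapses to $D$ since $KK^\#K=K$ and $KK^\#C=C$), and multiplying $X$ on the right by $\mathrm{diag}(AA^\#,KK^\#)$ gives $XMX=X$ after using $A^\#(AA^\#)=A^\#$ and $K^\#(KK^\#)=K^\#$. Since the group inverse, when it exists, is unique, this forces $X=M^\#$.

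For the converse I would compute $MX$, $XM$ and $MXM$ with no range assumptions. Comparing the off-diagonal blocks of $MX=XM$ yields $(I-AA^\#)BK^\#=A^\#B(I-KK^\#)$ and $(I-KK^\#)CA^\#=K^\#C(I-AA^\#)$, while comparing blocks of $MXM=M$ yields $(I-AA^\#)B(I-KK^\#)=0$ and $(I-KK^\#)C(I-AA^\#)=0$. The decisive step is to split the off-diagonal identities: left-multiplying $(I-AA^\#)BK^\#=A^\#B(I-KK^\#)$ by $A$ and using $A(I-AA^\#)=0$ gives $AA^\#B(I-KK^\#)=0$; right-multiplying it by $K$ and using $(I-KK^\#)K=0$ gives $(I-AA^\#)BKK^\#=0$; adding either of these to $(I-AA^\#)B(I-KK^\#)=0$ yields $(I-AA^\#)B=0$ and $B(I-KK^\#)=0$, that is, $R(B)\subseteq R(A)$ and $R(B^*)\subseteq R(K^*)$. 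The identity relating $C$, $A^\#$ and $K^\#$ is treated symmetrically (multiplying by $A$ on the right and by $K$ on the left) to produce $R(C)\subseteq R(K)$ and $R(C^*)\subseteq R(A^*)$.

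I expect the main obstacle to be organizing the converse cleanly: the group-inverse equations only deliver the ``mixed'' conditions $(I-AA^\#)B(I-KK^\#)=0$ and $(I-KK^\#)C(I-AA^\#)=0$ directly, and one must choose exactly the right one-sided multiplications by $A$ and $K$ — exploiting $AA^\#A=A$ and $KK^\#K=K$ — to separate these into the four individual inclusions. Keeping track of which defining equation supplies which identity, and confirming that the commutativity $MX=XM$ (and not merely $MXM=M$, $XMX=X$) is genuinely needed, is where the care lies; the block computations themselves are routine but lengthy.
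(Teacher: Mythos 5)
Your proposal is correct and follows essentially the same route as the paper: the forward direction verifies $MX=XM=\mathrm{diag}(AA^{\#},KK^{\#})$, $MXM=M$ and $XMX=X$ using the four range inclusions rewritten as $AA^{\#}B=B$, $CA^{\#}A=C$, $KK^{\#}C=C$, $BK^{\#}K=B$, and the converse extracts exactly the same identities $(I-AA^{\#})B(I-K^{\#}K)=0$, $(I-KK^{\#})C(I-A^{\#}A)=0$ from $MXM=M$ and the off-diagonal relations from $MX=XM$, then separates them by the same one-sided multiplications by $A$ and $K$. The only cosmetic difference is that you obtain the individual inclusions by adding the resulting identities rather than substituting back, which is an equally valid bookkeeping choice.
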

\begin{proof}
First, we observe that $CA^\# A=C$ (since $R(C^*)\subseteq R(A^*)$) and $BK^\# K=B$ (since $R(B^*)\subseteq R(K^*)$). We then have 
\begin{eqnarray*}
DK^\# K&=&(K+CA^\# B)K^\# K\\
 &=& K+CA^\# BK^\# K\\
&=& K+CA^\# B\\
&=& D.
\end{eqnarray*}
Also, 
\begin{eqnarray*}
A^{\#} B + A^{\#} BK^{\#} CA^{\#}B-A^{\#}BK^\# D&=&A^\# B + A^\# BK^\# (CA^\# B- D)\\
 &=&A^\# B-A^\#BK^\# K\\
&=&0.
\end{eqnarray*}
\newpage
Set 
\begin{center}
$X=\left(\begin{array}{cc}
  A^\#+A^\# BK^\# CA^\# & -A^\# BK^\# \\
  -K^\# CA^\# & K^\#
 \end{array}\right)$. 
\end{center}
From the earlier calculations one has
\begin{eqnarray*}
 XM & =& \left(\begin{array}{cc}
  A^\# + A^\# BK^\# CA^\# & -A^\# BK^\# \\
  -K^\# CA^\# & K^\#
 \end{array}\right)
\left(\begin{array}{cc}
  A & B \\
  C & D
 \end{array}\right)\\
 & =& \left(\begin{array}{cc}
  A^\# A & 0 \\
  0 & K^\# K
 \end{array}\right)
\end{eqnarray*}
and
\begin{eqnarray*}
MX & = & \left(\begin{array}{cc}
  A & B \\
  C & D
 \end{array}\right)
\left(\begin{array}{cc}
  A^\#+A^\# BK^\# CA^\# & -A^\# BK^\# \\
  -K^\# C A^\# & K^\#
 \end{array}\right) \\
& = & \left(\begin{array}{cc}
  AA^\# & 0 \\
  0 & KK^\#
 \end{array}\right),
\end{eqnarray*} 
where we have used the facts that $AA^\# B=B$ (since $R(B)\subseteq R(A)$) and 
$ C=KK^\# C$ (since $R(C)\subseteq R(K)$). So, $XM=MX$. Also,
\begin{eqnarray*}
MXM&=&
\left(\begin{array}{cc}
  A & BK^\# K \\
  C & DK^\# K
 \end{array}\right)\\
&=&\left(\begin{array}{cc}
A & B\\
C & D
\end{array}\right)\\
&=&M.
\end{eqnarray*}
Further, \\
\begin{eqnarray*}
 XMX 
&=&
\left(\begin{array}{cc}
  A^\# A & 0 \\
  0 & K^\# K
 \end{array}\right)
\left(\begin{array}{cc}
  A^\#+A^\# BK^\# CA^\# & -A^\# BK^\# \\
  -K^\# C A^\# & K^\#
 \end{array}\right) \\
&=&
\left(\begin{array}{cc}
  A^\#+A^\# BK^\# CA^\# & -A^\# BK^\# \\
  -K^\# C A^\# & K^\#
 \end{array}\right)\\
&=& X. 
\end{eqnarray*}
This completes the proof of the necessity part.
\newpage
Conversely, suppose that 
\begin{center}
$M^{\#}=\begin{pmatrix}
A^\# +A^\# BK^\# CA^\# & -A^\# BK^\# \\
-K^\# CA^\# & K^\#
\end{pmatrix}$.
\end{center}
Then, 
\begin{center}
$MM^{\#}=\begin{pmatrix}
AA^{\#}+AA^{\#}BK^{\#}CA^{\#}-BK^{\#}CA^{\#} & -AA^{\#}BK^{\#}+BK^{\#}\\
CA^{\#}+CA^{\#}BK^{\#}CA-DK^{\#}CA^{\#} & -CA^{\#}BK^{\#}+DK^{\#}
\end{pmatrix}$.
\end{center}
Equating the top left blocks of $MM^{\#}M$ and $M$, we then have 
\begin{center}
$AA^{\#}A+AA^{\#}BK^{\#}CA^{\#}A-BK^{\#}CA^{\#}A-AA^{\#}BK^{\#}C+BK^{\#}C=A$.
\end{center}
This simplifies to
\begin{center}
$A+(I-AA^{\#})BK^{\#}C(I-A^{\#}A)=A$,
\end{center}
which in turn yields 
\begin{eqnarray}\label{eq1}
(I-AA^{\#})BK^{\#}C(I-A^{\#}A) &=& 0.
\end{eqnarray}
By equating the top right blocks of $MM^{\#}M$ and $M$, we get 
\begin{center}
$AA^{\#}B+AA^{\#}BK^{\#}CA^{\#}B-BK^{\#}CA^{\#}B-AA^{\#}BK^{\#}D+BK^{\#}D=B$,
\end{center}
so that one has
\begin{center}
$AA^{\#}B+(I-AA^{\#})BK^{\#}K=B$. 
\end{center}
This reduces to
\begin{eqnarray}\label{eq2}
(I-AA^{\#})B(I-K^{\#}K) &=& 0.
\end{eqnarray}
Again, equating the bottom left blocks of $MM^{\#}M$ and $M$,
\begin{center}
$CA^{\#}A+CA^{\#}BK^{\#}CA^{\#}A-DK^{\#}CA^{\#}A-CA^{\#}BK^{\#}C+DK^{\#}C=C,$
\end{center}
we then have
\begin{eqnarray}\label{eq3}
(I-KK^{\#})C(I-A^{\#}A) &=& 0.
\end{eqnarray}
We also have $MM^{\#}=M^{\#}M$ and so, on equating the bottom left blocks of $MM^{\#}$ and $M^{\#}M$, we have 
\begin{eqnarray}\label{eq4}
K^{\#}C(I-A^{\#}A) &=&(I-KK^{\#})CA^{\#}.
\end{eqnarray}
\newpage
Premultiplying $(4)$ by $K$, we get 
\begin{center}
$KK^{\#}C(I-A^{\#}A)=0$
\end{center}
and thus from equation \eqref{eq3}, one has $CA^{\#}A=C$. So, $R(C^*)\subseteq R(A^*)$. Postmultiplying $(4)$ by $A$ and using the fact that $CA^{\#}A=C$, we get
\begin{center}
$(I-KK^{\#})C=0$.
\end{center}
Thus $R(C)\subseteq R(K)$.
Now, equating top right blocks of $MM^{\#}$ and $M^{\#}M$, one has 
\begin{eqnarray}\label{eq5}
(I-AA^{\#})BK^{\#}&=&A^{\#}B(I-KK^{\#}).
\end{eqnarray}
Postmultiplying equation \eqref{eq5} by $K$, we get
\begin{center}
$(I-AA^{\#})BK^{\#}K=0$
\end{center}
and from equation \eqref{eq2}, we get $AA^{\#}B=B$. Thus $R(B)\subseteq R(A)$. Finally, premultiplying \eqref{eq5} by $A$ and using the fact that $AA^{\#}B=B$, 
\begin{center}
$B(I-K^{\#}K)=0$.
\end{center}
Thus $R(B^*)\subseteq R(K^*)$.
\end{proof}

Next, we state a complementary result, whose proof is similar to the proof of Theorem \ref{GA}. Note that this result uses the pseudo Schur complement $L=A-BD^\# C$, which will be called the {\it complementary Schur complement}.

\begin{thm}\label{GD}
Let $M=
\left(\begin{array}{cc}
  A & B \\
  C & D
 \end{array}\right)$ as above. Suppose that $D^{\#}$ exists and $L=A-BD^\# C$. Suppose that $L^{\#}$ exists. Then $R(B^*)\subseteq R(D^*)$, 
$R(C)\subseteq R(D)$, $R(B)\subseteq R(L)$ and $R(C^*)\subseteq R(L^*)$ if and only if
\begin{center}
$M^\#=
 \left(\begin{array}{cc}
  L^\# & -L^\# BD^\# \\
  -D^\# C L^\# & D^\#+D^\# C^\# BD^\#
\end{array}\right)$. 
\end{center}
\end{thm}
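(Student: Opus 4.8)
The plan is to obtain Theorem~\ref{GD} from Theorem~\ref{GA} by exploiting the symmetry that swaps the roles of $A$ and $D$ (and of $B$ and $C$). Introduce the permutation matrix $P=\begin{pmatrix} 0 & I \\ I & 0 \end{pmatrix}\in\mathbb{C}^{2n\times 2n}$, where $I$ is the $n\times n$ identity, and set $N=PMP=\begin{pmatrix} D & C \\ B & A \end{pmatrix}$. Since $P=P^{*}=P^{-1}$, a routine verification of the three defining equations shows that for any $X\in\mathbb{C}^{2n\times 2n}$ the group inverse $(PXP)^{\#}$ exists precisely when $X^{\#}$ does, and in that case $(PXP)^{\#}=PX^{\#}P$; in particular $M^{\#}$ exists iff $N^{\#}$ exists, and $M^{\#}=PN^{\#}P$.

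Next I would apply Theorem~\ref{GA} to $N$, that is, with the blocks $A,B,C,D$ of that theorem replaced respectively by $D,C,B,A$. Under this replacement the standing hypotheses ``$A^{\#}$ exists'' and ``$K^{\#}$ exists'' with $K=D-CA^{\#}B$ become ``$D^{\#}$ exists'' and ``$L^{\#}$ exists'' with $L=A-BD^{\#}C$, matching the hypotheses of Theorem~\ref{GD}; the four range inclusions $R(C^{*})\subseteq R(A^{*})$, $R(B)\subseteq R(A)$, $R(C)\subseteq R(K)$, $R(B^{*})\subseteq R(K^{*})$ turn into exactly $R(B^{*})\subseteq R(D^{*})$, $R(C)\subseteq R(D)$, $R(B)\subseteq R(L)$, $R(C^{*})\subseteq R(L^{*})$; and the conclusion of Theorem~\ref{GA} reads
\[
N^{\#}=\begin{pmatrix} D^{\#}+D^{\#}CL^{\#}BD^{\#} & -D^{\#}CL^{\#} \\ -L^{\#}BD^{\#} & L^{\#}\end{pmatrix}.
\]
Conjugating by $P$, i.e.\ computing $PN^{\#}P$, merely permutes the four blocks and returns
\[
M^{\#}=\begin{pmatrix} L^{\#} & -L^{\#}BD^{\#} \\ -D^{\#}CL^{\#} & D^{\#}+D^{\#}CL^{\#}BD^{\#}\end{pmatrix},
\]
which is the asserted formula. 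Because Theorem~\ref{GA} is an equivalence, so is the translated statement, and hence both implications of Theorem~\ref{GD} follow at once.

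There is no genuine obstacle beyond careful bookkeeping: one must track which range inclusion is sent to which under the substitution $A\leftrightarrow D$, $B\leftrightarrow C$, and verify the single block identity $PN^{\#}P=M^{\#}$. If instead one prefers the self-contained argument suggested by the ``proof is similar'' remark, one repeats the proof of Theorem~\ref{GA} with the blocks interchanged: the auxiliary identities become $BD^{\#}D=B$, $CL^{\#}L=C$, $AL^{\#}L=A$, $DD^{\#}C=C$ and $LL^{\#}B=B$; the candidate $X$ is the matrix displayed in the theorem; the products $XM$ and $MX$ collapse to $\mathrm{diag}(L^{\#}L,\,D^{\#}D)$ and $\mathrm{diag}(LL^{\#},\,DD^{\#})$, so $XM=MX$, and then $M(XM)=M$ and $(XM)X=X$ give $MXM=M$ and $XMX=X$. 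The converse is handled, exactly as in the proof of Theorem~\ref{GA}, by equating the top-left, top-right and bottom-left blocks of $MM^{\#}M$ with those of $M$, together with the bottom-left (or top-right) blocks of the identity $MM^{\#}=M^{\#}M$. In either route, the step needing the most attention is the final matching of the computed $2\times2$ block array with the matrix in the statement.
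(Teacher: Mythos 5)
Your proposal is correct, and your primary route is genuinely different from what the paper intends. The paper offers no proof of Theorem~\ref{GD} at all, remarking only that ``the proof is similar to the proof of Theorem~\ref{GA}'', i.e.\ it expects the reader to rerun the whole block computation with $A\leftrightarrow D$, $B\leftrightarrow C$ (your second, self-contained route, which you have also transcribed accurately: the auxiliary identities and the collapse of $XM$, $MX$ to $\mathrm{diag}(L^{\#}L, D^{\#}D)$ and $\mathrm{diag}(LL^{\#}, DD^{\#})$ all check out). Your reduction via the involutory permutation $P=\left(\begin{smallmatrix}0&I\\ I&0\end{smallmatrix}\right)$ is cleaner: the identity $(PXP)^{\#}=PX^{\#}P$ is immediate from uniqueness of the group inverse and $P^{2}=I$, the four range inclusions translate exactly as you list them, and both directions of the equivalence are inherited for free, so Theorem~\ref{GD} becomes a formal corollary of Theorem~\ref{GA} rather than a parallel computation. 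What this buys is not just brevity but a check on the statement itself: your derivation yields the bottom-right block $D^{\#}+D^{\#}CL^{\#}BD^{\#}$, whereas the paper prints $D^{\#}+D^{\#}C^{\#}BD^{\#}$, which is evidently a typographical error (there is no reason for $C^{\#}$ to exist, and the expression is not even the image of the corresponding block of Theorem~\ref{GA} under the substitution). You should state explicitly that the theorem is being proved with the corrected block; otherwise, as literally written, the displayed matrix is not $M^{\#}$ and the equivalence would fail.
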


Next, we define the principal pivot transform in terms of the group inverse. As mentioned earlier, principal pivot transform involving Moore-penrose inverse of the block matrices was studied in \cite{meen}.

\newpage
\begin{defn}\label{D: D1}  Let $M$ be defined as above and $A^{\#}$ exist. Then the pseudo principal pivot transform of $M$ relative to $A$ is defined by 
\begin{center}
$P:=pppt(M,A)_{\#}=
\left(\begin{array}{cc}
  A^\# & -A^\# B \\
  CA^\#  & K
 \end{array}\right)$,
\end{center} where $K=D-CA^\# B$. Next, suppose that $D^{\#}$ exists. The complementary pseudo principal pivot transform of $M$ relative to $D$ is defined by 
\begin{center}
$Q:=cpppt(M,D)_{\#}=
\left(\begin{array}{cc}
 L & BD^\# \\
-D^\# C & D^\#
\end{array}\right)$, 
\end{center} 
 where $L=A-BD^\# C$. 
\end{defn}

Both the operations of pseudo principal transforms are involutions, in the presence of certain assumptions, as we prove next.

\begin{lem}\label{L1}
Let $M$ be defined as above. Let $A^{\#}$ and $D^{\#}$ exist.\\
$(i)$ Suppose that $R(B)\subseteq R(A)$ and $R(C^*)\subseteq R(A^*)$. Then $pppt\left(P, A^\# \right)_{\#}=M$.\\
$(ii)$ Suppose that $R(C)\subseteq R(D)$ and $R(B^*)\subseteq R(D^*)$. Then $cpppt\left(Q, D^\# \right)_{\#}=M$. 
\end{lem}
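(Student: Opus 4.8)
The plan is to prove part $(i)$ directly from the definition, and then observe that part $(ii)$ follows by the same computation with the roles of $A$ and $D$ (and of $pppt$ and $cpppt$) interchanged. For $(i)$, recall that $P = pppt(M,A)_{\#}$ has blocks $P_{11}=A^{\#}$, $P_{12}=-A^{\#}B$, $P_{21}=CA^{\#}$, $P_{22}=K=D-CA^{\#}B$. Since $A^{\#}$ exists, its group inverse $(A^{\#})^{\#}$ exists and equals $A$ (this is a standard property: $A A^{\#} A = A$, $A^{\#} A A^{\#} = A^{\#}$, $A A^{\#} = A^{\#} A$ say that $A$ is \emph{the} group inverse of $A^{\#}$). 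So to form $pppt(P, A^{\#})_{\#}$ I first compute the Schur-complement-type block $\widetilde{K} := P_{22} - P_{21}(A^{\#})^{\#}P_{12} = K - (CA^{\#})A(-A^{\#}B) = K + CA^{\#}AA^{\#}B = K + CA^{\#}B$, where I used $A^{\#}AA^{\#}=A^{\#}$. Hence $\widetilde{K} = (D - CA^{\#}B) + CA^{\#}B = D$. Then by Definition 2.1 applied to $P$ relative to $A^{\#}$,
\begin{center}
$pppt(P,A^{\#})_{\#} = \left(\begin{array}{cc} (A^{\#})^{\#} & -(A^{\#})^{\#}P_{12} \\ P_{21}(A^{\#})^{\#} & \widetilde{K}\end{array}\right) = \left(\begin{array}{cc} A & -A(-A^{\#}B) \\ (CA^{\#})A & D\end{array}\right) = \left(\begin{array}{cc} A & AA^{\#}B \\ CA^{\#}A & D\end{array}\right).$
\end{center}
Now the two range hypotheses enter: $R(B)\subseteq R(A)$ gives $AA^{\#}B = B$, and $R(C^{*})\subseteq R(A^{*})$ gives $CA^{\#}A = C$ (these are exactly the identities already invoked in the proof of Theorem 2.1). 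Substituting, the matrix becomes $M$, which proves $(i)$.

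For part $(ii)$, the argument is entirely parallel. Here $Q = cpppt(M,D)_{\#}$ has blocks $Q_{11}=L=A-BD^{\#}C$, $Q_{12}=BD^{\#}$, $Q_{21}=-D^{\#}C$, $Q_{22}=D^{\#}$, and again $(D^{\#})^{\#}=D$. Applying the $cpppt$ operation in Definition 2.1 to $Q$ relative to $D^{\#}$, the relevant complementary Schur complement is $\widetilde{L} := Q_{11} - Q_{12}(D^{\#})^{\#}Q_{21} = L - (BD^{\#})D(-D^{\#}C) = L + BD^{\#}DD^{\#}C = L + BD^{\#}C = A$, using $D^{\#}DD^{\#}=D^{\#}$. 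Then
\begin{center}
$cpppt(Q,D^{\#})_{\#} = \left(\begin{array}{cc} \widetilde{L} & Q_{12}(D^{\#})^{\#} \\ -(D^{\#})^{\#}Q_{21} & (D^{\#})^{\#}\end{array}\right) = \left(\begin{array}{cc} A & BD^{\#}D \\ DD^{\#}C & D\end{array}\right),$
\end{center}
and the hypotheses $R(C)\subseteq R(D)$ and $R(B^{*})\subseteq R(D^{*})$ give $DD^{\#}C = C$ and $BD^{\#}D = B$ respectively, so this matrix equals $M$.

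The computations are all routine; the only things to be careful about are (a) correctly identifying $(A^{\#})^{\#}=A$ and $(D^{\#})^{\#}=D$ so that the definition of $pppt$/$cpppt$ can be reapplied, and (b) matching the block positions of the defining formula in Definition 2.1 exactly, since the $pppt$ and $cpppt$ formulas place the "inverted" block and the Schur complement in opposite corners. No genuine obstacle is expected; the content of the lemma is essentially that the Schur-complement block $K$ is "undone" by adding back $CA^{\#}B$, together with the two one-sided range conditions that turn $AA^{\#}B$ and $CA^{\#}A$ back into $B$ and $C$.
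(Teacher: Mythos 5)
Your proof is correct and follows essentially the same route as the paper: apply the definition of $pppt$ (resp.\ $cpppt$) to $P$ (resp.\ $Q$) using $(A^{\#})^{\#}=A$ and $(D^{\#})^{\#}=D$, simplify the Schur-complement block via $A^{\#}AA^{\#}=A^{\#}$, and invoke the range hypotheses to recover $B$ and $C$. The only difference is that you write out part $(ii)$ explicitly, which the paper leaves as ``similar to part $(i)$.''
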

\begin{proof}
$(i)$: Set $W=A^\#, X=-A^\# B$ and $Y=CA^\#$. Then $P=
\left(\begin{array}{cc}
W & X\\
Y & K
\end{array}\right)$. So,
\begin{eqnarray*}
pppt(P, A^\#)_{\#}=
pppt (P, W)_{\#}&=&
\left(\begin{array}{cc}
W^\# & -W^\# X\\
YW^\# & K-YW^\# X
\end{array}\right)\\
&=&
\left(\begin{array}{cc}
  A & -A(-A^\# B) \\
  CA^\# A  & D-CA^\# B +CA^\# AA^\# B
 \end{array}\right)\\
&=&\left(\begin{array}{cc}
  A &  B \\
  C & D
 \end{array}\right) \\
&=& M. 
\end{eqnarray*}
$(ii)$: The proof is similar to part $(i)$.
\end{proof}

Finally, we derive the domain-range exchange property. Domain-range exchange property in terms of Moore-Penrose inverse of the matrix is proved in \cite{kavi}. This property is well known in the nonsingular case \cite{tsat}. 

\begin{lem}\label{L2} Let $M$ be defined as earlier. Let $A^{\#}$ and $D^{\#}$ exist.\\
$(i)$ Suppose that $R(B)\subseteq R(A)$ and $R(C^*)\subseteq R(A^*)$.
Then $M$ and $P=pppt(M,A)_{\#}$ are related by the formula: 
\begin{center}
$M
\left(\begin{array}{cc}
x^{1}\\
x^{2}
\end{array}\right)
=
\left(\begin{array}{cc}
AA^\# y^{1} \\
y^{2}
\end{array}\right)$
if and only if $P
\left(\begin{array}{cc}
  y^{1} \\
  x^{2}
 \end{array}\right)
=
\left(\begin{array}{cc}
  A^\# A x^{1} \\
  y^{2}
 \end{array}\right)$. 
\end{center}
(ii) Suppose that $R(C)\subseteq R(D)$ and $R(B^*)\subseteq R(D^*)$. Then $M$ and $Q=cpppt(M,D)_{\#}$ are related by the formula:
\begin{center}
$M
\left(\begin{array}{cc}
x^{1}\\
x^{2}
\end{array}\right)
=
\left(\begin{array}{cc}
y^{1}\\
DD^\# y^{2}
\end{array}\right)$
if and only if
$Q
\left(\begin{array}{cc}
y^{1}\\
x^{2}
\end{array}\right)
=
\left(\begin{array}{cc}
x^{1}\\
D^\# Dy^{2}
\end{array}\right)$.
\end{center} 
\end{lem}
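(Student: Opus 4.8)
The plan is to prove part $(i)$ by direct block computation, translating each of the two matrix equations into a pair of vector equations and showing the equivalence component-wise, then to note that part $(ii)$ follows by the same argument applied to the complementary setting. Throughout I will use the two standing range hypotheses in the form of the identities $AA^\# B = B$ (from $R(B)\subseteq R(A)$) and $CA^\# A = C$ (from $R(C^*)\subseteq R(A^*)$); these are exactly the facts already exploited in Theorem \ref{GA} and Lemma \ref{L1}.

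For part $(i)$, writing $M\begin{pmatrix}x^1\\x^2\end{pmatrix}=\begin{pmatrix}AA^\#y^1\\y^2\end{pmatrix}$ out in blocks gives the system
\begin{align*}
Ax^1 + Bx^2 &= AA^\# y^1,\\
Cx^1 + Dx^2 &= y^2.
\end{align*}
Similarly, recalling $P=\begin{pmatrix}A^\# & -A^\# B\\ CA^\# & K\end{pmatrix}$ with $K=D-CA^\# B$, the equation $P\begin{pmatrix}y^1\\x^2\end{pmatrix}=\begin{pmatrix}A^\# A x^1\\y^2\end{pmatrix}$ reads
\begin{align*}
A^\# y^1 - A^\# B x^2 &= A^\# A x^1,\\
CA^\# y^1 + K x^2 &= y^2.
\end{align*}
The first system should be shown equivalent to the second. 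Starting from the first equation of the top system, premultiplying by $A^\#$ and using $A^\# A A^\# = A^\#$ yields $A^\# A x^1 + A^\# B x^2 = A^\# y^1$, which is precisely the first equation of the bottom system; conversely, premultiplying the first bottom equation by $A$ and using $AA^\# A = A$, $AA^\# B = B$ recovers $Ax^1 + Bx^2 = AA^\# y^1$. For the second coordinate, substitute: from the bottom system $CA^\# y^1 + (D - CA^\# B)x^2 = y^2$, and replacing $A^\# y^1$ by $A^\# A x^1 + A^\# B x^2$ (legitimate once the first equation is in hand) gives $C(A^\# A x^1 + A^\# B x^2) + Dx^2 - CA^\# B x^2 = CA^\# A x^1 + Dx^2 = Cx^1 + Dx^2$, using $CA^\# A = C$; so the second bottom equation is equivalent to $Cx^1 + Dx^2 = y^2$. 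Reversing these manipulations establishes the other direction, completing $(i)$.

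The only subtlety — and the step I expect to require the most care — is the bookkeeping in the second-coordinate equivalence: one must be careful to establish the first-coordinate equation first (so that the substitution $A^\# y^1 = A^\# A x^1 + A^\# B x^2$ is available) before manipulating the second, and to check that the resulting chain of implications is genuinely reversible rather than merely one-directional, since the identities $AA^\#$ and $A^\# A$ are projections, not the identity. Once that is handled, part $(ii)$ is obtained verbatim by the substitution $A\mapsto D$, $B\mapsto B$, $C\mapsto C$, $D\mapsto A$ together with the transpose-symmetry of the range conditions, using instead $DD^\# C = C$ and $BD^\# D = B$ and the complementary transform $Q=\begin{pmatrix}L & BD^\#\\ -D^\# C & D^\#\end{pmatrix}$ with $L = A - BD^\# C$; I would simply remark that the argument is identical and omit the repetition.
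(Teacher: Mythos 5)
Your proposal is correct and follows essentially the same route as the paper: expand both matrix equations into block form, pass between the first components by premultiplying with $A^\#$ (respectively $A$, using $AA^\#B=B$), and handle the second components via $CA^\#A=C$; your substitution step for the second coordinate is just the paper's ``premultiply the first equation by $C$'' in slightly different clothing. The treatment of part $(ii)$ as a symmetric repetition also matches the paper.
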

\begin{proof}
We prove (i). The proof for (ii) is similar. Suppose that 
\begin{center}
$M
\left(\begin{array}{cc}
x^{1}\\
x^{2}
\end{array}\right)
=
\left(\begin{array}{cc}
AA^\# y^{1}\\
y^{2}
\end{array}\right)$.
\end{center}
Then 
\begin{center}
$Ax^{1}+Bx^{2}=AA^\# y^{1}$
\end{center}
and 
\begin{center}
$Cx^{1}+Dx^{2}=y^{2}$.
\end{center}
Premultipling the first equation by $A^\#$ (and rearranging) we get 
\begin{center}
$A^\# y^{1}-A^\# Bx^{2}=A^\# Ax^{1}$.
\end{center}
Premultiplying this equation by $C$, we then have 
\begin{center}
$CA^\# y^{1}-CA^\# Bx^{2}=CA^\# Ax^{1}=Cx^{1}$.
\end{center}
So, $CA^\# y^{1}+Kx^{2} =CA^\# y^{1}+Dx^{2}-CA^\# Bx^{2} =Cx^{1}+Dx^{2} =y^{2}$. Thus, 
\begin{center}
$P
\left(\begin{array}{cc}
y^{1}\\
x^{2}
\end{array}\right)
=
\left(\begin{array}{cc}
A^\# y^{1}-A^\# Bx^{2}\\
CA^\# y^{1}+Kx^{2}
\end{array}\right)
=
\left(\begin{array}{cc}
A^\# Ax^{1}\\
y^{2}
\end{array}\right)$.
\end{center}
Conversely, let 
\begin{center}
$P
\left(\begin{array}{cc}
y^{1}\\
x^{2}
\end{array}\right)
=
\left(\begin{array}{cc}
A^\# Ax^{1}\\
y^{2}
\end{array}\right)$. 
\end{center}
Then 
\begin{center}
$A^\# y^{1}-A^\# Bx^{2}=A^\# Ax^{1}$
\end{center}
and 
\begin{center}
$CA^\# y^{1}+(D-CA^\# B)x^{2}=y^{2}$. 
\end{center}
Premultiplying the first equation by $A$, we have $AA^\# y^{1}-Bx^{2}=Ax^{1}$ so that $Ax^{1}+Bx^{2}=AA^\# y^{1}$. Again, premultiplying the first equation by $C$, we get $CA^\# y^{1}-CA^\# Bx^{2}=Cx^{1}$. Hence, using the second equation we have, $Cx^{1}+Dx^{2}=CA^\# y^{1}-CA^\# Bx^{2}+Dx^{2}=y^{2}$, proving that 
\begin{center}
$M
\left(\begin{array}{cc}
x^{1}\\
x^{2}
\end{array}\right)
=
\left(\begin{array}{cc}
AA^\# y^{1}\\
y^{2}
\end{array}\right)$.
\end{center}
\end{proof}

\end{document}